\documentclass[11pt]{amsart}
\usepackage{amsmath}
\usepackage{amssymb}
\usepackage{amscd}

\def\NZQ{\mathbb}               
\def\NN{{\NZQ N}}

\def\ZZ{{\NZQ Z}}

%
%
%
%

\newtheorem{Theorem}{Theorem}[section]

\newtheorem{Corollary}[Theorem]{Corollary}

%
%
\let\epsilon\varepsilon
\let\phi=\varphi
\let\kappa=\varkappa

%
%
\textwidth=15cm \textheight=22cm \topmargin=0.5cm
\oddsidemargin=0.5cm \evensidemargin=0.5cm \pagestyle{plain}
\begin{document}

\title{A stronger local monomialization theorem}
\author{Steven Dale Cutkosky}
\thanks{partially supported by NSF}

\address{Steven Dale Cutkosky, Department of Mathematics,
University of Missouri, Columbia, MO 65211, USA}
\email{cutkoskys@missouri.edu}

\dedicatory{Dedicated to Professor Winfried Bruns on the occasion of his 70th birthday}

\begin{abstract} In this article we prove stronger versions of  local monomialization.
\end{abstract}

\maketitle

In this note we  derive  extensions of the monomialization theorems for morphisms of varieties in \cite{Ast} and \cite{LMTE}. I thank Jan Denef for conversations on this topic, suggesting that I make improvements of this type, and explaining applications of these theorems. A global ``weak'' monomialization theorem is established in \cite{ADK} by Abramovich, Denef and Karu, generalizing an earlier theorem by Abramovich and Karu in \cite{AK}.  A monomialization is ``weak'' if the modifications used have no further requirements; in a monomialization all modifications must be products of blow ups of nonsingular sub varieties. In this note we show that  a local monomialization can be found which satisfies the extra local statements obtained in \cite{ADK}. In \cite{De} and \cite{De1}
some comments are made about how the results of this paper can be used.

The techniques in this paper come from the theory of resolution of singularities. Some basic references in this subject are \cite{H0}, \cite{BM1}, \cite{BV}, \cite{CP1}, \cite{CP2}, \cite{CP3}, \cite{EV}, \cite{Ha1}, \cite{T}. In this paper we assume that the ground field has characteristic zero. Counterexamples to local monomialization in positive characteristic are given in \cite{C2}. A proof of local monomialization, within the context of analytic geometry, is given in \cite{LM} for germs of real and complex analytic maps.

\section{A stronger local monomomialization theorem for algebraic morphisms}

In this section we state and prove an extension Theorem \ref{TheoremF} of the local monomialization theorem Theorem 1.4 \cite{LMTE}. 

Suppose that $K$ is an algebraic function field over a field $k$. A local ring $R$ is an algebraic local ring of $K$ if $R$ is a subring of $K$, the quotient field of $R$ is $K$ and $R$ is essentially of finite type over $k$.

\begin{Theorem}\label{TheoremF} Suppose that $k$ is a field of characteristic zero, $K\rightarrow K^*$ is a (possibly transcendental) extension of algebraic function fields over $k$, and that $\nu^*$ is a valuation of $K^*$ which is trivial on $k$. Further suppose that $R$ is an algebraic local ring of $K$ and $S$ is an algebraic local ring of $K^*$ such that $S$ dominates $R$ and $\nu^*$ dominates $S$. Suppose that $I$ is a nonzero ideal of $S$. Then there exist sequences of monoidal transforms $R\rightarrow R'$ and $S\rightarrow S'$ along $\nu^*$ such that $R'$ and $S'$ are regular local rings, $S'$ dominates $R'$, there exist regular parameters $(y_1,\ldots,y_n)$ in $S'$, $(x_1,\ldots,x_m)$ in $R'$, units $\delta_1,\ldots,\delta_m\in S'$ and an $m\times n$ matrix $(c_{ij})$ of nonnegative integers such that $(c_{ij})$ has rank $m$, and
$$
x_i=\prod_{j=1}^ny_j^{c_{ij}}\delta_i
$$
for $1\le i\le m$.
Further, we have that 
\begin{enumerate}
\item[1)] $S'$ is a local ring of the blowup of an ideal $J$ of $S$ such that $JS'=(y_1^{\alpha_1}\cdots y_n^{\alpha_n})$ for some $\alpha_1,\ldots,\alpha_n\in \NN$.
\item[2)]   $I S'=(y_1^{\beta_1}\cdots y_n^{\beta_n})$ for some $\beta_1,\ldots,\beta_n\in \NN$.
\end{enumerate}
\end{Theorem}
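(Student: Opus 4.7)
The plan is to derive the theorem from Theorem~1.4 of \cite{LMTE}, which already provides the monomial relation $x_i = \prod_j y_j^{c_{ij}}\delta_i$ with $(c_{ij})$ of rank $m$, and then to carry out additional monoidal transforms of $S$ along $\nu^*$ that simultaneously principalize $I$ and the ideal defining the composite blow-up, without spoiling the monomial form already in hand.

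First I would apply Theorem~1.4 of \cite{LMTE} to the data $R$, $S$, $\nu^*$ to obtain monoidal transform sequences $R\to R_1$ and $S\to S_1$ along $\nu^*$ yielding regular parameters $(x_1,\ldots,x_m)$ of $R_1$ and $(y_1,\ldots,y_n)$ of $S_1$ with $x_i=\prod_j y_j^{c_{ij}}\delta_i$ and $(c_{ij})$ of rank $m$. By the standard fact that a finite sequence of monoidal transforms factors as a local ring of the blow-up of a single ideal (a product of pullbacks of the successive centers), there is an ideal $J_0\subset S$ with $S_1$ a local ring of $\mathrm{Bl}_{J_0}S$, and $J_0 S_1$ is supported on the SNC divisor $y_1\cdots y_n=0$ which contains the exceptional locus of $S\to S_1$.

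Next I would apply strong embedded principalization in characteristic zero (available from \cite{H0}, \cite{BM1}, \cite{EV}) to the product ideal $(I\cdot J_0)S_1$, using a sequence of monoidal transforms of $S_1$ along $\nu^*$ whose centers are nonsingular and have normal crossings with the SNC divisor $y_1\cdots y_n=0$. The result is a monoidal transform sequence $S_1\to S'$ along $\nu^*$ such that both $IS'$ and $J_0 S'$ become principal monomial ideals in some regular parameters of $S'$, which after relabeling I again call $(y_1,\ldots,y_n)$. Because every center blown up in this second stage is contained in the SNC divisor, in each chart the new $y$-parameters are obtained from the old by a unimodular monomial substitution times units; consequently the exponent matrix of the $x_i$ transforms by a rank-preserving integer operation, and the expression $x_i=\prod_j y_j^{c'_{ij}}\delta'_i$ persists in $S'$ with $(c'_{ij})$ still of rank $m$ and $\delta'_i$ units. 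Setting $R':=R_1$, dominance of $R'$ by $S'$ and of $S'$ by $\nu^*$ is preserved under the additional blow-ups, and the whole composite $S\to S'$ is again a local ring of the blow-up of a single ideal $J$ of $S$, namely the appropriate product of $J_0$ with the transforms of the further centers; by construction $JS'=(y_1^{\alpha_1}\cdots y_n^{\alpha_n})$, while $IS'=(y_1^{\beta_1}\cdots y_n^{\beta_n})$, giving both (1) and (2).

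The main obstacle I expect is the coordination in the second stage: one must carry out principalization of $I$ and $J_0$ simultaneously by centers constrained to lie in the SNC divisor supporting the exceptional locus, so that the monomial form of the $x_i$ and the rank of the exponent matrix are preserved at every step, while still following $\nu^*$. This is where the strong form of principalization (preserving SNC structure) in characteristic zero is essential, and where care is needed to verify that the unimodular monomial coordinate changes in each chart do not decrease the rank of $(c_{ij})$. Once this is set up, the remaining verifications — factoring $S\to S'$ as a single blow-up, extracting monomial exponents $\alpha_i,\beta_i$, and checking dominance — are formal.
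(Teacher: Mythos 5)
There is a genuine gap at the heart of your second stage. You apply strong principalization to $(I\cdot J_0)S_1$ with centers that are nonsingular and have normal crossings with the divisor $y_1\cdots y_n=0$, and then claim that ``in each chart the new $y$-parameters are obtained from the old by a unimodular monomial substitution times units,'' so that the form $x_i=\prod_j y_j^{c_{ij}}\delta_i$ and the rank $m$ of the exponent matrix survive. That claim only holds for blow ups of \emph{strata} of the divisor (intersections of its components), i.e.\ for toroidal centers. Principalizing an arbitrary ideal $I$ forces centers that are merely SNC with the divisor but not strata: locally such a center is $V(y_{j_1},\ldots,y_{j_r},z_1,\ldots,z_s)$ with $s>0$, and in the chart of a transversal variable $z_1$ one has $y_{j_l}=z_1'y_{j_l}'$, where at the new center of $\nu^*$ some of the $y_{j_l}'$ may be units. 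Then several columns of $(c_{ij})$ collapse into the single column of the new exceptional parameter $z_1'$ and others are absorbed into the units $\delta_i$; this transformation of exponent matrices is not unimodular, and the rank can drop below $m$, destroying the very conclusion you need. There is no mechanism in your argument to rule this out, and in general it cannot be ruled out while keeping $R'=R_1$ fixed: preserving the monomial form while monomializing a further element genuinely requires additional monoidal transforms of $R'$ as well.

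This is exactly where the paper diverges from your plan. Instead of invoking Theorem 1.4 of \cite{LMTE} as a black box followed by generic principalization, the paper goes inside its proof: Theorem \ref{TheoremM1} (via Theorems 4.8 and 4.10 of \cite{Ast}, resp.\ 9.1 and 9.3 of \cite{LMTE}) monomializes the single element $h=f_0f_1\cdots f_l$ (generators of the blow-up ideal and of $IS'$) by Perron-type transforms of \emph{both} $R'$ and $S'$ that are built to preserve the monomial form; Corollary \ref{CorM2} then makes $KS(1)$ and $IS(1)$ monomial and assembles the single ideal $J\subset S$ with $JS(1)=K^{\beta}LS(1)$ via the universal property of blowing up; and Theorems \ref{TheoremM3} and \ref{TheoremM4} carry this through the composite structure of higher rank valuations by induction on ${\rm rank}\,V^*$, with the variables grouped compatibly with the chain of prime ideals of the valuation ring --- structure that blow-ups coming from an abstract principalization algorithm would not respect. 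Your outline of how to combine $J_0$ with the later centers into one ideal $J$ of $S$, and the bookkeeping of dominance, are fine; the missing idea is the valuation-theoretic, form-preserving monomialization of $h$ (allowing further modification of $R'$), which is what the paper's new statements supply.
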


The proof of Theorem 1.4 \cite{LMTE} is given in \cite{Ast} and \cite{LMTE}. Also see \cite{C3} for some errata. The new part of Theorem 1.1 is the addition of the conclusions 1) and 2).
We now explain the changes in this proof which must be made to obtain the stronger result Theorem \ref{TheoremF}. Theorem \ref{TheoremF} is a consequence of  Theorem \ref{TheoremM4} which is proven at the end of this section.

We first indicate changes required in the proofs of \cite{Ast}  to obtain Theorem \ref{TheoremF} in the case of a finite extension of function fields.
In the construction of Theorem 5.1 \cite{Ast}, we have that $\nu(y_1'),\ldots,\nu(y_s')$ are rationally independent. We first settle the case when the quotient field of $S$ is finite over the quotient field of $R$ and the valuation $\nu$ has rank 1.

\vskip .2truein
\begin{Theorem} \label{TheoremM1}
Suppose that $R\rightarrow S$ and $R'\rightarrow S'$ satisfy the assumptions and conclusions of Theorem 5.1 \cite{Ast} and $h\in S'$ is nonzero. Then there exist sequences of monoidal transforms $R'\rightarrow R''$ and $S'\rightarrow S''$ along $\nu$ such that $S''$ dominates $R''$, $R''$ has regular parameters $x_1'',\ldots,x_n''$, $S''$ has regular parameters $y_1'',\ldots,y_n''$ having the monomial form of the conclusions of Theorem 5.1 \cite{Ast} and
$$
h=(y_1'')^{e_1}\cdots(y_s'')^{e_s}\overline u
$$
where $e_1,\ldots,e_s\in \NN$ and  $\overline u\in S''$ is a unit.
\end{Theorem}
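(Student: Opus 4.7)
The plan is to extend the Perron-transform argument used to prove Theorem 5.1 of \cite{Ast}, carrying along the extra element $h$. After factoring out the largest monomial in $y_1',\ldots,y_n'$ that divides $h$ in $S'$ (this piece is already of the required form), I reduce to showing that the remaining factor $\bar h\in S'$ can be made a unit by further monoidal transforms along $\nu$, while preserving the monomial conclusion of Theorem 5.1 of \cite{Ast} relating the parameters of $R''$ and $S''$.

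Working in the completion $\hat S'\iso k'[[y_1',\ldots,y_n']]$, where $k'$ is the residue field of $S'$, I expand $\bar h$ as a power series. Because $\nu$ has rank one and $\nu(y_1'),\ldots,\nu(y_s')$ are rationally independent, any two distinct monomials in $y_1',\ldots,y_s'$ have distinct $\nu$-values; hence a generic Perron transformation in the variables $(y_1',\ldots,y_s')$ can be chosen so that a single monomial in the initial form of $\bar h$ attains the strictly smallest $\nu$-value. After that Perron transform, the chosen monomial divides (the transform of) $\bar h$ in the new ring, and iteration on the $\nu$-order of the residual piece reduces $\bar h$ to a unit. Perron transforms in the first $s$ directions extend, as in \cite{Ast}, to compatible monoidal transforms of $R'$ that preserve the representation $x_i'=\prod_j(y_j')^{c_{ij}}\delta_i$ up to units, so the conclusions of Theorem 5.1 of \cite{Ast} pass to the new pair $R''$, $S''$.

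The main obstacle is the variables $y_{s+1}',\ldots,y_n'$, whose $\nu$-values lie in the $\QQ$-span of $\nu(y_1'),\ldots,\nu(y_s')$ but are not themselves rationally independent from them. Following \cite{Ast}, for each $j>s$ I normalise $y_j'=M_j u_j$ with $M_j$ a monomial in $y_1',\ldots,y_s'$ chosen so that $\nu(u_j)=0$; the residues $\bar u_j$ generate an algebraic extension of the residue field of $\nu|_R$. The initial form of $\bar h$ then becomes a polynomial in $\bar u_{s+1},\ldots,\bar u_n$ with monomial coefficients in the first $s$ variables; a Weierstrass-preparation argument in the henselisation (which remains within the algebraic local category) combined with the Perron argument of the previous paragraph clears this polynomial and produces the desired form in which only the first $s$ new variables appear. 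The hardest part is orchestrating all of these monoidal transforms simultaneously on $R'$ and $S'$ so that the conclusions of Theorem 5.1 of \cite{Ast} persist at every stage, $h$ serving as an additional invariant to monomialize.
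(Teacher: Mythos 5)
The statement you are asked to prove is, in the paper, disposed of in one line: it is quoted as an immediate consequence of Theorems 4.8 and 4.10 of \cite{Ast}, which are exactly the results asserting that an element $h\in S'$ can be turned into a monomial in $y_1'',\ldots,y_s''$ times a unit by further monoidal transforms along $\nu$ that preserve the monomial form of Theorem 5.1 of \cite{Ast}. Your proposal does not use these results; instead it sketches a re-derivation of them by the Perron-transform method. That is the right circle of ideas (it is the method of \cite{Ast}), but as a proof it has genuine gaps, and they are precisely the points where the cited theorems do the work. First, you yourself flag the central difficulty --- ``orchestrating all of these monoidal transforms simultaneously on $R'$ and $S'$ so that the conclusions of Theorem 5.1 of \cite{Ast} persist at every stage'' --- and then leave it unresolved; but this compatibility (that Perron transforms in $y_1',\ldots,y_s'$ can be matched by monoidal transforms of $R'$ so that $S''$ still dominates $R''$ and the relations $x_i''=\prod_j (y_j'')^{c_{ij}}\delta_i$ with $\mbox{rank}(c_{ij})$ maximal survive) is not a remark, it is the content of the inductive machinery of Chapter 4 of \cite{Ast}.

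Second, the analytic steps are asserted rather than proved. For an infinite series in $\hat S'$ the claim that a single Perron transform makes one monomial strictly divide all other terms needs an argument: rational independence of $\nu(y_1'),\ldots,\nu(y_s')$ separates values of distinct monomials in the first $s$ variables, but the terms of $h$ involve $y_{s+1}',\ldots,y_n'$ with unbounded exponents, and controlling these is where the hypotheses of rank $1$ and the detailed normalizations of \cite{Ast} enter. Likewise, your treatment of the residual variables via ``a Weierstrass-preparation argument in the henselisation'' is exactly the delicate point: the residues $\bar u_j$ may satisfy algebraic relations over the residue field, so the candidate initial form, viewed as a polynomial in the $\bar u_j$, can degenerate, and handling this occupies the sequence of results leading to Theorems 4.8 and 4.10 of \cite{Ast}; moreover the henselization is not essentially of finite type over $k$, so one must still show that the final factorization $h=(y_1'')^{e_1}\cdots(y_s'')^{e_s}\overline u$ holds in the algebraic local ring $S''$ itself, with $\overline u$ a unit of $S''$. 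In short, your text is an outline of the proof of the theorems the paper simply invokes; as it stands it does not constitute a proof, whereas citing Theorems 4.8 and 4.10 of \cite{Ast} (as the paper does) closes the argument immediately.
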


\begin{proof}  This is an immediate consequence of Theorems 4.8 and 4.10 of \cite{Ast}
 \end{proof}

\begin{Corollary}\label{CorM2} Suppose that $R\rightarrow S$ and $R'\rightarrow S'$ satisfy the assumptions and conclusions of Theorem 5.1 \cite{Ast} and $I\subset S$ is a nonzero ideal.  Then there exist  sequences of monoidal transforms $R'\rightarrow R''$ and $S'\rightarrow S''$ along $\nu$ such that $S''$ dominates $R''$, $R''$ has regular parameters $x_1'',\ldots,x_n''$, $S''$ has regular parameters $y_1'',\ldots,y_n''$ having the monomial form of the conclusions of Theorem 5.1 \cite{Ast} and the following holds.
\begin{enumerate}
\item[1)] $S''$ is a local ring of the blowup of an ideal $J$ of $S$ such that $JS''=((y_1'')^{a_1}\cdots (y_s'')^{a_s})$ for some $a_1,\ldots,a_s\in \NN$.
\item[2)]   $I S''=((y_1'')^{b_1}\cdots (y_s'')^{b_s})$ for some $b_1,\ldots,b_s\in \NN$.
\end{enumerate}
\end{Corollary}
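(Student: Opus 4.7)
My plan is to first use Theorem \ref{TheoremM1} iteratively to reduce $I$ to a monomial ideal in regular parameters of the Theorem 5.1 form, and then to principalize that monomial ideal by a combinatorial (Perron-type) sequence of monoidal transforms.

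I would pick generators $h_1,\ldots,h_r$ of $I$ and apply Theorem \ref{TheoremM1} to $h_1$, producing monoidal transforms $R'\to R^{(1)}$ and $S'\to S^{(1)}$ whose conclusions still satisfy Theorem 5.1 of \cite{Ast} and in which $h_1$ is a monomial in the new regular parameters of $S^{(1)}$ times a unit. Applying the same theorem again to $h_2\in S^{(1)}$, then to $h_3$, and so on, yields after $r$ steps $R'\to\widetilde R$ and $S'\to\widetilde S$ in which each $h_i=\widetilde y_1^{b_1^{(i)}}\cdots \widetilde y_s^{b_s^{(i)}}\widetilde u_i$ with $\widetilde u_i\in\widetilde S$ a unit, so that $I\widetilde S=(\widetilde m_1,\ldots,\widetilde m_r)$ becomes a monomial ideal in parameters $\widetilde y_1,\ldots,\widetilde y_s$ whose $\nu$-values are rationally independent.

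Because of this rational independence, the values $\nu(\widetilde m_i)$ are pairwise distinct, so after reindexing the generators, one of them---say $\widetilde m_1$---has strictly smallest $\nu$-value. I would next apply a standard Perron-type sequence of monoidal transforms (blow-ups of intersections $V(\widetilde y_i,\widetilde y_j)$ followed by localization in the chart selected by $\nu$) to clear the negative entries in each difference vector $b^{(i)}-b^{(1)}$; this algorithm preserves both the Theorem 5.1 monomial form and the rational independence of the $\nu$-values of the parameters, and outputs $R''$ and $S''$ with regular parameters $y_1'',\ldots,y_n''$ satisfying $IS''=((y_1'')^{b_1}\cdots(y_s'')^{b_s})$, which is conclusion~2).

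For conclusion 1), the composite $S\to S''$ is a sequence of monoidal transforms $S=S_0\to S_1\to\cdots\to S_N=S''$, each the localization along $\nu$ of the blow-up of a regular ideal $J_i\subset S_i$. By the standard fact that an iterated sequence of blow-ups of regular ideals is dominated by, and appears as a chart of, a single blow-up of an ideal of the base, one can choose $J\subset S$ (essentially a suitable product of pullbacks of the $J_i$) so that $S''$ is a local ring of $\mathrm{Bl}_J(\mathrm{Spec}\,S)$; since each $J_iS_{i+1}$ is principal and monomial in the regular parameters of $S_{i+1}$ and this monomial form is preserved by all later transforms, $JS''$ will be a product of principal monomials in $y_1'',\ldots,y_s''$, hence of the required form. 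I expect this last step---identifying a single ideal $J\subset S$ that simultaneously realizes $S''$ as a localization of its blow-up and extends to a monomial in $S''$---to be the most delicate point of the proof; the earlier monomialization of $I$ and the combinatorial principalization are by comparison routine applications of Theorem \ref{TheoremM1} and standard toric arguments.
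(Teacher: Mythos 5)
Your argument for conclusion 2) is essentially workable (it is a variant of the paper's: the paper applies Theorem \ref{TheoremM1} once to the single product $h=f_0f_1\cdots f_l$ of generators, and then invokes Lemma 4.2 and Remark 4.1 of \cite{Ast} to principalize the resulting monomial ideal, rather than monomializing generators one at a time), but even there you assert without proof that the monomial form of $h_1,\dots,h_{i-1}$ persists through the later sequences produced by Theorem \ref{TheoremM1}; the product trick avoids this entirely, since in the regular (hence UFD) local ring $S''$ every factor of a monomial times a unit is itself a monomial in $y_1'',\dots,y_s''$ times a unit.

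The genuine gap is in conclusion 1). You claim that for each intermediate monoidal transform $S_i\rightarrow S_{i+1}$ the ideal $J_iS_{i+1}$ is ``monomial in the regular parameters of $S_{i+1}$ and this monomial form is preserved by all later transforms,'' and that therefore the composite ideal $J\subset S$ pulls back to a monomial in $y_1'',\dots,y_s''$. Neither assertion is justified, and the second is false in general: the sequence $S\rightarrow S'$ is part of the \emph{hypotheses} (it is whatever sequence realizes the conclusions of Theorem 5.1 of \cite{Ast}), its centers are not under your control, and there is no reason why its exceptional divisors, viewed in $S''$, should be monomials in the distinguished parameters $y_1'',\dots,y_s''$ (the only monomial information available concerns the map $R''\rightarrow S''$, not arbitrary elements of $S'$; moreover parameters $y_j$ with $j>s$ undergo translations under later transforms, so ``monomial'' is not a stable notion for them). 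The required statement that the pullback of the earlier exceptional data becomes a monomial in the first $s$ parameters is exactly as deep as Theorem \ref{TheoremM1} itself, and the paper obtains it by a specific device you are missing: write $S'$ as a local ring of the blow up of a single ideal $K\subset S$, note that $KS'=(f_0)$ is principal, and include $f_0$ as a factor of the element $h$ to which Theorem \ref{TheoremM1} is applied, so that $KS''$ becomes an explicit monomial; then, since $S'\rightarrow S(1)$ is a product of Perron transforms, $S(1)$ is a local ring of the blow up of an ideal $L\subset S'$ with $LS(1)$ monomial, and the universal property of blowing up gives $J\subset S$ and $\beta>0$ with $JS(1)=K^{\beta}LS(1)$, which is then monomial. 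Without monomializing $f_0$ (or some equivalent control of $K$), your choice of $J$ as a product of pullbacks of the centers gives an ideal whose extension to $S''$ you cannot identify, so conclusion 1) does not follow.
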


\begin{proof}  Let $K$ be an ideal in $S$ such that $S'$ is a local ring of the blow up of $K$. Let $KS'=(f_0)$ and $IS'=(f_1,\ldots,f_l)$. Let $h=\prod_{j=0}^lf_i$. By Theorem \ref{TheoremM1}, there exist sequences of monoidal transforms $R'\rightarrow R''$ and $S'\rightarrow S''$ along $\nu$ such that $R''\rightarrow S''$ and $h$ satisfy the conclusions of Theorem \ref{TheoremM1}.
Now by Lemma 4.2  and Remark 4.1  \cite{Ast}, there exists a sequence of monoidal transforms  $S''\rightarrow S(1)$ such that $R''\rightarrow S(1)$ has a monomial form as in the conclusion of Theorem 5.1 \cite{Ast}, $KS(1)=(y_1(1)^{e_{1}}\cdots y_s(1)^{e_s})$ and $IS(1)=(y_1(1)^{c_{1}}\cdots y_s(1)^{c_s})$ for some $e_i$ and $c_j$ in $\NN$.

Now $S'\rightarrow S(1)$   is a product of Perron transforms (as defined in Section 4.1 \cite{Ast}). Thus $S(1)$  is a local ring of the blow up of an  ideal $L$ in $S'$ such that 
$$
LS(1)=(y_1(1)^{g_1(i+1)}\cdots y_s(1)^{g_s(i+1)})
$$
 for some $g_j(i+1)\in \NN$. There exists a positive integer $\beta$ such that $S(1)$ is a local ring of the blow up of an ideal $J$ of $S$ such that $JS(1)=K^{\beta}LS(1)$
 (this can be verified using the universal property of blowing up) and  the corollary follows.
\end{proof}

We now prove the case when the quotient field of $S$ is finite over the quotient field of $R$ and the  valuation ring has arbitrary rank. We use the notation of Theorem 5.3 \cite{Ast}.

\begin{Theorem}\label{TheoremM3} 
Suppose that $R\rightarrow S$ satisfies the assumptions  of Theorem 5.3 \cite{Ast} and $I$ is a nonzero ideal in $S$. Then there exist sequences of monoidal transforms $R\rightarrow R'$ and $S\rightarrow S'$ such that $R'\rightarrow S'$ satisfies the conclusions of Theorem 5.3 \cite{Ast} and
\begin{enumerate}
\item[1)] $S'$ is a local ring of the blowup of an ideal $J$ of $S$ such that 
$$
JS'=(\prod_{j=0}^{r-1}\prod_{l=1}^{s_{j+1}}(w_{t_1+\cdots+t_j+l})^{\epsilon_{jl}})
$$
 for some $\epsilon_{jl}\in \NN$ (with the convention that $t_1+\cdots+t_0=0$).
\item[2)]   $I S'=(\prod_{j=0}^{r-1}\prod_{l=1}^{s_{j+1}}(w_{t_1+\cdots+t_j+l})^{\gamma_{jl}})$ for some $\gamma_{jl}\in \NN$.
\end{enumerate}
\end{Theorem}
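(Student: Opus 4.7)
The plan is to mirror the inductive argument that proves Theorem 5.3 \cite{Ast}, substituting Corollary \ref{CorM2} for each application of Theorem 5.1 \cite{Ast} in order to simultaneously track the ideal $I$ and a bookkeeping ideal recording the sequence of monoidal transforms performed so far.

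Recall that a rank $r$ valuation $\nu$ has a chain of isolated prime ideals $\mathfrak{p}_0=(0)\subsetneq \mathfrak{p}_1\subsetneq \cdots\subsetneq \mathfrak{p}_r$ in its valuation ring, and the proof of Theorem 5.3 \cite{Ast} produces the parameters $w_1,\ldots,w_n$ level by level by applying Theorem 5.1 to the rank one quotient valuations $\nu/\nu_{\mathfrak{p}_j}$. At the $(j{+}1)$-st level the proof introduces $t_{j+1}$ new parameters, of which the first $s_{j+1}$ are the ones having rationally independent values in the rank one quotient; these are precisely the variables $w_{t_1+\cdots+t_j+l}$ for $l=1,\ldots,s_{j+1}$ that appear in conclusions 1) and 2).

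At each step of the induction I invoke Corollary \ref{CorM2} in place of Theorem 5.1, applied to the image in the appropriate residue ring of the ideal $I$ together with an auxiliary ideal $K_j$ that records the composition of blow ups already performed at the earlier levels. The corollary then supplies a single ideal at level $j{+}1$ whose blow up realizes the level-$(j{+}1)$ sequence of monoidal transforms while simultaneously making both $I$ and $K_j$ into monomials in the rationally independent variables at that level. After all $r$ levels have been processed, the overall composed sequence $S\to S'$ is realized as the blow up of a single ideal $J\subset S$ obtained by combining the level-wise blow up ideals via the universal property of blowing up, with appropriate power-twists exactly as in the last paragraph of the proof of Corollary \ref{CorM2}.

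The main obstacle is to verify that the monomial form established at one level survives the monoidal transforms performed at finer levels. This holds because a parameter arising at a coarser level has strictly larger $\nu$-value than every parameter produced at a finer level, and is therefore a unit with respect to the rank one quotient valuation used at the finer level; consequently the Perron transforms of Section 4.1 \cite{Ast} executed at the finer level preserve its monomial expression up to a unit factor. Packaging the data from all $r$ levels gives $JS'$ and $IS'$ in exactly the monomial form required by conclusions 1) and 2).
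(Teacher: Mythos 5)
Your overall skeleton is the same as the paper's: run the proof of Theorem 5.3 of \cite{Ast} again, substituting Corollary \ref{CorM2} for the use of Theorem 5.1, and at the end assemble a single ideal $J\subset S$ by the universal property of blowing up with a power twist, exactly as in the last paragraph of the proof of Corollary \ref{CorM2}. But the step you single out as the ``main obstacle'' is not the real difficulty, and your argument for it does not close the actual gap. The level-by-level applications of Corollary \ref{CorM2} take place in localizations (residue rings) attached to the composite valuations of lower rank; what they give back in the ring $S''$ itself is only that $JS''_{q''_{r-1}}$ and $IS''_{q''_{r-1}}$ are monomial, i.e. $JS''=H''\cdot(\mbox{monomial})$ and $IS''=K''\cdot(\mbox{monomial})$ where $H''$ and $K''$ are ideals of $S''$ that become the unit ideal only after localizing at $q''_{r-1}$. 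Your claim that a coarser-level parameter is ``a unit with respect to the rank one quotient valuation'' at the finer level, so that the Perron transforms preserve the monomial form up to a unit, proves nothing beyond this localized statement: being a unit for the quotient valuation is not being a unit in $S''$, and the discrepancy ideals $H''$, $K''$ need not be monomial, nor even principal, in $S''$.

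The substance of the paper's proof is precisely the monomialization of these discrepancy ideals at the last level: one applies Theorem \ref{TheoremM1} (through the argument of Corollary \ref{CorM2}) to make $H''U'$ and $K''U'$ monomial in $\overline y_{\lambda+1},\ldots,\overline y_{\lambda+s_r}$; and even after that, in $\overline S(m'+1)$ the generators only have the form $(\mbox{monomial})+\Sigma$ and $(\mbox{monomial})+\Psi_i$ with $\Sigma,\Psi_i\in(\overline y_1(m'+1),\ldots,\overline y_{\lambda}(m'+1))$, so a further monoidal transform sequence, with exponents $\epsilon_i=\max\{\gamma_i,\delta_i\}$ twisting the first $\lambda$ variables by the monomial in $\overline y_{\lambda+1}(m'+2),\ldots,\overline y_{\lambda+s_r}(m'+2)$, is needed to absorb these error terms and make the ideals principal and monomial before the universal-property argument producing $J^*$ with $J^*\overline S(m'+2)=J^{\beta}B\overline S(m'+2)$ can be run. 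None of this appears in your proposal, so the key step of the proof is missing, while the step you do argue is either vacuous (it restates the localized conclusion you already have) or false if read as a statement in $S''$.
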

\vskip .2truein
To obtain Theorem \ref{TheoremM3}, we must modify the proof of Theorem 5.3 \cite{Ast} as follows. 
\vskip .2truein
On line 1 of page 117, replace the reference to Theorem 5.1 \cite{Ast} with Corollary \ref{CorM2}.
\vskip .2truein
Insert the following at the end of line 15 on page 119 (after ``for $1\le i\le \lambda$''): ``By our construction, $S''$ is a local ring of the blow up of an ideal $J$ of $S$ such that 
$$
JS''_{q''_{r-1}}=(\prod_{j=0}^{r-2}\prod_{l=1}^{s_{j+1}}(y''_{t_1+\cdots+t_j+l})^{\alpha_{jl}})
$$
for some $\alpha_{jl}\in \NN$ so 
$$
JS''=H''(\prod_{j=0}^{r-2}\prod_{l=1}^{s_{j+1}}(y''_{t_1+\cdots+t_j+l})^{\alpha_{jl}})
$$
where $H''$ is an ideal in $S''$ such that $H''S''_{q''_{r-1}}=S''_{q''_{r-1}}$. By our construction,
$$
IS''_{q''_{r-1}}=(\prod_{j=0}^{r-2}\prod_{l=1}^{s_{j+1}}(y''_{t_1+\cdots+t_j+l})^{\beta_{jl}})
$$
for some $\beta_{jl}\in \NN$ so 
$$
IS''=K''(\prod_{j=0}^{r-2}\prod_{l=1}^{s_{j+1}}(y''_{t_1+\cdots+t_j+l})^{\alpha_{jl}})
$$
where $K''$ is an ideal in $S''$ such that $K''S''_{q''_{r-1}}=S''_{q''_{r-1}}$.''
\vskip .2truein
After line 11 of page 120 (after ``for $1\le i\le \lambda$''), insert: ``We also may  obtain, using Theorem \ref{TheoremM1} and the argument of the proof of Corollary \ref{CorM2} (above) that
$$
H''U'=(\overline y_{\lambda+1}^{\gamma_1}\cdots\overline y_{\lambda+s_r}^{\gamma_{s_r}})
$$
and
$$
K''U'=(\overline y_{\lambda+1}^{\delta_1}\cdots\overline y_{\lambda+s_r}^{\delta_{s_r}}).''
$$
\vskip .2truein
Insert the following at the end of line -2 of page 121 (at the end of the proof): ``We have
$$
H''\overline S(m'+1)=(\overline y_{\lambda+1}(m'+1)^{\gamma_1}\cdots \overline y_{\lambda+s_r}(m'+1)^{\gamma_{s_r}}+\Sigma)
$$
and
$$
K''\overline S(m'+1)=(\overline y_{\lambda+1}(m'+1)^{\delta_1}\cdots \overline y_{\lambda+s_r}(m'+1)^{\delta_{s_r}}+\Psi_1,\ldots,\overline y_{\lambda+1}(m'+1)^{\delta_1}\cdots \overline y_{\lambda+s_r}(m'+1)^{\delta_{s_r}}+\Psi_e)
$$
where $\Sigma,\Psi_1,\ldots\Psi_e\in (\overline y_1(m'+1),\ldots,\overline y_{\lambda}(m'+1))$ and $\gamma_1,\ldots,\delta_{s_r}\in \NN$. 

Let $\epsilon_i=\max\{\gamma_i,\delta_i\}$  for $1\le i\le s_r$. Define a MTS $\overline S(m'+1)\rightarrow \overline S(m'+2)$ by
$$
\overline y_i(m'+1)=\left\{\begin{array}{ll}
\prod_{j=1}^{s_r}\overline y_{\lambda+j}(m'+2)^{\epsilon_j}\overline y_i(m'+2)&\mbox{ for }1\le i\le \lambda\\
\overline y_i(m'+2)&\mbox{ for }\lambda< i\le n.
\end{array}\right.
$$
Further, by our construction, $\overline S(m'+2)$ is a local ring of the blow up of an ideal $B$ of $S''$ such that 
$$
B\overline S(m'+2)=(\prod_{j=0}^{r-1}\prod_{l=1}^{s_{j+1}}(\overline y_{t_1+\cdots+t_j+l}(m'+2))^{\gamma_{jl}})
$$
for some $\epsilon_{jl}\in \NN$. 
Thus there exists an ideal $J^*$ of $S$ and $\beta>0$ such that $\overline S(m'+2)$ is a local ring of the blow up of $J^*$ and $J^*\overline S(m'+2)=J^{\beta}B\overline S(m'+2)$. 
We  have thus achieved the conclusions of  Theorem \ref{TheoremM3} in $R(m')\rightarrow \overline S(m'+2)$.
\vskip .2truein
We now indicate the changes which need to be made in the statements of \cite{LMTE} to obtain the proof of Theorem \ref{TheoremF}.
\vskip .2truein
We first extend Theorem \ref{TheoremM1} to arbitrary extensions of characteristic zero algebraic function fields. We will call this ``Extended Theorem \ref{TheoremM1}''.

 In the statement and proof of Theorem \ref{TheoremM1}, we replace 
references to Theorem 5.1 \cite{Ast} with Theorem 10.1 \cite{LMTE},  Theorems 4.8 and 4.10 \cite{Ast} with Theorems 9.1 and 9.3 \cite{LMTE}. Also replace $n$ with $m$ when referring to regular parameters in birational extensions of $R$ and $s$ with $\overline s$. We must add the following sentence to the first line of the proof: ``First suppose that $\mbox{rank}(\nu)>0$, so that $\mbox{rank}(\nu)=1$ (here $\nu$ is the restriction of the given rank 1 valuation $\nu^*$ of the quotient field of $S$ to the quotient field $K$ of $R$)''. At the end of the proof, add: ``If $\mbox{rank}(\nu)=0$, then $\nu$ is trivial so $R=K$ and the proof is a substantial simplification.''

\vskip .2truein
We now extend Corollary \ref{CorM2}. We will call this ``Extended Corollary \ref{CorM2}''. In the statement and proof of Corollary \ref{CorM2}, replace references to Theorem 5.1 \cite{Ast} with Theorem 10.1 \cite{LMTE} and Section 4.1 \cite{Ast}  with Section 5 \cite{LMTE}. Replace references to Theorem \ref{TheoremM1} with ``Extended Theorem \ref{TheoremM1}''. Replace $n$ with $m$ when referring to regular parameters in birational extensions of $R$. Replace $s$ with $\overline s$.
We must add the following sentence to the first line of the proof: ``First suppose that $\mbox{rank}(\nu)>0$, so that $\mbox{rank}(\nu)=1$ (here $\nu$ is the restriction of the given rank  1 valuation  $\nu^*$ of the quotient field of $S$ to the quotient field $K$ of $R$)''. At the end of the proof, add: ``If $\mbox{rank}(\nu)=0$, then $\nu$ is trivial so $R=K$ and the proof is a substantial simplification.''

\vskip .2truein
We now adopt the notation on valuation rings introduced on page 1579 - 1581 of \cite{LMTE}, which we will use below.

We  extend Theorem \ref{TheoremM3} to arbitrary extensions of algebraic function fields in the following Theorem \ref{TheoremM4}.

\begin{Theorem}\label{TheoremM4} 
Suppose that $R\rightarrow S$ satisfies the assumptions  of Theorem 10.5 \cite{LMTE} and $I$ is a nonzero ideal in $S$. Then there exist sequences of monoidal transforms $R\rightarrow R'$ and $S\rightarrow S'$ such that $R'\rightarrow S'$ satisfies the conclusions of Theorem 10.5 \cite{LMTE} and
\begin{enumerate}
\item[1)] $S'$ is a local ring of the blowup of an ideal $J$ of $S$ such that 
$$
JS'=(\prod (w_{\overline t_0+\cdots+\overline t_{i-1}+\overline t_{i,1}+\cdots+\overline t_{i,j-1}+l})^{\epsilon_{ijl}})
$$
where the product is over
$$
0\le i\le \beta, 1\le j\le \sigma(i), 1\le l\le \overline s_{ij}
$$
(with $\overline s_{i1}=\overline s_i$) and $\epsilon_{ijl}\in \NN$ for all $i,j,l$.
\item[2)]   We have that
$$
I S'=(\prod (w_{\overline t_0+\cdots+\overline t_{i-1}+\overline t_{i,1}+\cdots+\overline t_{i,j-1}+l})^{\gamma_{ijl}})
$$
where the product is over
$$
0\le i\le \beta, 1\le j\le \sigma(i), 1\le l\le \overline s_{ij}
$$
and $\gamma_{ijl}\in \NN$ for all $i,j,l$.
\end{enumerate}
\end{Theorem}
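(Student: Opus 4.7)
The proof follows the same pattern used to derive Theorem \ref{TheoremM3} from Theorem 5.3 of \cite{Ast}, applied now to the proof of Theorem 10.5 of \cite{LMTE}, which handles the arbitrary (possibly transcendental) extension of function fields. The plan is to insert the bookkeeping of a blow-up ideal $J$ and of the given ideal $I$ into the induction on the valuation-theoretic filtration that drives the proof of Theorem 10.5 of \cite{LMTE}. Concretely, I would replace every reference in that proof to the finite-extension monomialization (Theorem 10.1 of \cite{LMTE}, i.e., Theorem 5.1 of \cite{Ast}) or to the higher-rank case (Theorem 5.3 of \cite{Ast}) by the corresponding enhanced statement: Extended Corollary \ref{CorM2} in place of Theorem 10.1 of \cite{LMTE}, and Theorem \ref{TheoremM3} in place of Theorem 5.3 of \cite{Ast}. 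These enhanced statements simultaneously monomialize $I$ and exhibit the resulting ring as a local ring of the blow-up of an ideal that itself becomes a monomial in the new parameters.

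Next, I would carry the extra data through the induction built into the proof of Theorem 10.5 of \cite{LMTE}. That proof climbs through the composition series of the valuation ring indexed by $0\le i\le \beta$ and, at each level, through the intermediate fields indexed by $1\le j\le \sigma(i)$, introducing at each stage $(i,j)$ a new block of $\overline s_{ij}$ variables $w_{\overline t_0+\cdots+\overline t_{i-1}+\overline t_{i,1}+\cdots+\overline t_{i,j-1}+l}$. At each such stage, the enhanced corollary provides monomial expressions for the transforms of both $J$ and $I$ modulo variables of strictly higher level in the filtration; the residual ``error'' ideals play the same role as the ideals $H''$, $K''$ (and the tail terms $\Sigma$, $\Psi_i$) that were inserted into the proof of Theorem \ref{TheoremM3}. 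To eliminate these errors I would perform at each stage a cleanup monoidal transform of the form
$$
\overline y_i \mapsto \prod_{l} \overline y_{\mathrm{new},l}^{\,\epsilon_l}\,\overline y_i,
$$
with $\epsilon_l$ chosen to dominate the $J$-exponent and the $I$-exponent simultaneously, so that after the transform both ideals become principal monomial ideals in the extended parameter system. The universal property of blow-up, used exactly as at the end of the proof of Corollary \ref{CorM2}, then lets me express the composite of all the stages as the blow-up of a single ideal $J^*$ of the original $S$, and $I$ as a monomial in the final parameters.

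The main obstacle will be the combinatorial bookkeeping across the much richer index structure $(i,j,l)$ with $0\le i\le \beta$, $1\le j\le \sigma(i)$, $1\le l\le \overline s_{ij}$: one must verify that the monomial form obtained at a stage $(i,j)$ is preserved by all subsequent monoidal transforms at higher stages $(i',j')$. This is handled, as in the proof of Theorem \ref{TheoremM3}, by the fact that the newly introduced cleanup transforms live entirely in the higher-level variables and therefore leave the monomial exponents in the lower-level variables unchanged; this same feature also guarantees that the powers of $J^*$ produced when passing up through the levels are compatible across the nested structure of the composition series.
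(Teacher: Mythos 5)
Your proposal takes essentially the same route as the paper: the paper's proof is likewise a set of substitutions into the earlier argument --- Extended Theorem \ref{TheoremM1} and Extended Corollary \ref{CorM2} at the rank-one step, re-indexed variables, the same cleanup monoidal transforms with exponents dominating both the $J$- and $I$-exponents, and the universal property of blowing up to produce a single ideal $J^*$ of $S$ --- organized as an induction on $\mbox{rank}\, V^*$ with the two cases $\sigma(\beta)=1$ and $\sigma(\beta)>1$. The only small difference is that in the inductive step the paper invokes Theorem \ref{TheoremM4} itself for lower-rank valuations rather than the finite-extension Theorem \ref{TheoremM3}, which is the natural reading of your induction along the composition series.
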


\begin{proof}
In the  proof of Theorem \ref{TheoremM3}, replace references to Theorem 5.3 \cite{Ast} with Theorem 10.5 \cite{LMTE}. 
Replace references to Theorem \ref{TheoremM3} with  Theorem \ref{TheoremM4}, references to Corollary \ref{CorM2} with Extended Corollary \ref{CorM2}.
and references to Theorem \ref{TheoremM1}  with Extended Theorem \ref{TheoremM1}. The indexing of the variables must be changed (as in the statement of Theorem \ref{TheoremM4}).
The prime ideal $q_{r-1}$ in the valuation ring  of the quotient field of $S$ is replaced by the prime ideal $q_{\beta-1,\sigma(\beta-1)}$ of the valuation ring of the quotient field of $S$, so $q_{r-1}''=q_{r-1}\cap S''$ becomes $q_{\beta-1}''=q_{\beta-1,\sigma(\beta-1)}\cap S''$.

We must add the following sentences as the first lines of the proof:  ``We prove the theorem by induction on $\mbox{rank }V^*$. If $\mbox{rank }V^*=1$ then the theorem is immediate from Extended Corollary \ref{CorM2}. 

By induction on $\gamma=\mbox{rank } V^*$, we may assume that the theorem is true whenever $\mbox{rank } V^*=\gamma-1$. We are reduced to proving the theorem in the following two cases: 
\vskip .2truein
Case 1. $\sigma(\beta)=1$

Case 2. $\sigma(\beta)>1$.
\vskip .2truein

Suppose that we are in Case 1, $\sigma(\beta)=1$. Then $V^*/q_{\beta-1,\sigma(\beta-1)}$ is a rank 1 valuation ring which dominates the rank 1 valuation ring $V/p_{\beta-1}$. $V^*/q_{\beta-1,\sigma(\beta-1)}$ has rational rank $\overline s_{\beta}$ and $V/p_{\beta-1}$ has rational rank $\overline r_{\beta}$.''
\vskip .2truein
We must add the following sentences as the last lines of the proof: ``Now suppose that we are in Case 2, $\sigma(\beta)>1$. Then $V^*/q_{\beta,\sigma(\beta)-1}$ is a rank 1, rational rank $\overline s_{\beta,\sigma(\beta)}$ valuation ring which dominates  the rank 0 valuation ring $V/p_{\beta}$, which is a field. The proof in Case 2 is thus a substantial simplification of the proof in Case 1.''
\end{proof}

\section{A geometric local monomialization theorem}

The main result in this section is Theorem \ref{TheoremM9}.  Uniformizing parameters  on an affine $k$-variety $V$ are a set of elements $u_1,\ldots,u_s\in A=\Gamma(V,\mathcal O_V)$ such that $du_1,\ldots,du_s$ is a free basis of $\Gamma(V,\Omega_{V/k})$ as an $A$-module.

\begin{Theorem}\label{TheoremM6} Suppose that $k$ is a field of characteristic zero, $\phi:Y\rightarrow X$ is a dominant morphism of $k$-varieties  and $\nu$ is a zero dimensional valuation of the function field  $k(Y)$ (the residue field of the valuation ring of $\nu$ is algebraic over $k$) which has a center on $Y$ (the valuation ring of $\nu$ dominates a local ring of $Y$). Further suppose that $\mathcal I\subset \mathcal O_Y$ is a nonzero ideal sheaf. Then there exists a commutative diagram of morphisms of $k$-varieties 
\begin{equation}\label{eqM8}
\begin{array}{lll}
Y_{\nu}&\stackrel{\phi_{\nu}}{\rightarrow}&X_{\nu}\\
\beta_{\nu}\downarrow&&\downarrow \alpha_{\nu}\\
Y&\stackrel{\phi}{\rightarrow}&X
\end{array}
\end{equation}
where the vertical arrows are proper morphisms which are products of blow ups of nonsingular sub varieties, and if $q'$ is the center of $\nu$ on $Y_{\nu}$ and $p'$ is the center of $\nu$ on $X_{\nu}$, then there exists an affine open neighborhood $V_{\nu}$ of $q'$ in $Y_{\nu}$ and an affine open neighborhood $U_{\nu}$ of $p'$ in $X_{\nu,p'}$, regular parameters $y_1,\ldots,y_n$ in $\mathcal O_{Y_{\nu,q'}}$ which are uniformizing parameters on $V_{\nu}$ and regular parameters $x_1,\ldots,x_m$ in $\mathcal O_{X_{\nu},p'}$ which are uniformizing parameters on $U_{\nu}$ (where $m=\dim X$, $n=\dim Y$) and units $\delta_1,\ldots,\delta_m\in \Gamma(V_{\nu},\mathcal O_{Y_{\nu}})$ such that
\begin{enumerate}
\item[1)] $x_i=\prod_{j=1}^ny_j^{c_{ij}}\delta_i\mbox{ with }c_{ij}\in \NN\mbox{ for }1\le i\le m$ and ${\rm rank}(c_{ij})=m$.
\item[2)] $V_{\nu}\setminus Z(y_1\cdots y_n)\rightarrow Y$ is an open immersion;
\item[3)] $\mathcal I\mathcal O_{V_{\nu}}=y_1^{a_1}\cdots y_n^{a_n}\mathcal O_{V_{\nu}}$ for some $a_1,\ldots,a_n\in \NN$
\item[4)] $\phi_{\nu}:V_{\nu}\rightarrow U_{\nu}$ is toroidal (Section 2.2 \cite{ADK}) with respect to the locus of the product of the $y_j$ and the locus of the product of the $x_i$. 
\end{enumerate}
\end{Theorem}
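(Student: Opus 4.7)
The plan is to derive this geometric statement from its algebraic counterpart, Theorem \ref{TheoremF}, by globalizing the local construction with the help of Hironaka-type principalization. Let $q$ be the center of $\nu$ on $Y$ and $p$ the center of $\nu$ on $X$, and set $R=\mathcal O_{X,p}$, $S=\mathcal O_{Y,q}$, $I=\mathcal I_q$. Applying Theorem \ref{TheoremF} produces sequences of monoidal transforms $R\to R'$ and $S\to S'$ along $\nu$, regular parameters $(x_1,\ldots,x_m)$ in $R'$ and $(y_1,\ldots,y_n)$ in $S'$, units $\delta_1,\ldots,\delta_m$, and a nonnegative integer matrix $(c_{ij})$ of rank $m$ with $x_i=\prod_j y_j^{c_{ij}}\delta_i$; in addition, $S'$ is a local ring of the blowup of an ideal $J\subset S$ with $JS'=(y_1^{\alpha_1}\cdots y_n^{\alpha_n})$, and $IS'=(y_1^{\beta_1}\cdots y_n^{\beta_n})$. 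The analogous ideal $K\subset R$ realizing $R'$ as a local ring of the blowup of $K$ is extracted from the construction on the $X$-side.

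Next I would globalize. Extend $J$ and $I$ to coherent ideal sheaves on $Y$ (replace $J$ by its schematic closure from $\mathrm{Spec}\,S$ in $Y$), and extend $K$ analogously on $X$. Apply embedded resolution / principalization in characteristic zero to $\mathcal J\cdot \mathcal I$ on $Y$ and to $K$ on $X$, obtaining proper morphisms $\beta_\nu:Y_\nu\to Y$ and $\alpha_\nu:X_\nu\to X$, each a product of blowups of nonsingular centers, such that the transforms of these sheaves become locally principal with simple normal crossing support. Performing, if necessary, finitely many further monoidal transforms along $\nu$ (which by a lifting argument of the type used in \cite{LMTE} can be globalized to products of blowups of nonsingular centers dominating the originals), we may arrange that the centers $q'$, $p'$ of $\nu$ on $Y_\nu$, $X_\nu$ satisfy $\mathcal O_{Y_\nu,q'}=S'$ and $\mathcal O_{X_\nu,p'}=R'$.

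The morphism $\phi_\nu:Y_\nu\to X_\nu$ is defined first as a rational map (the composition $Y_\nu\to Y\to X\dashrightarrow X_\nu$), and the relations $x_i=\prod_j y_j^{c_{ij}}\delta_i$ at $q'$ show that $\phi_\nu$ is regular at the generic point of $\nu$. Resolving the indeterminacy of this rational map by further blowups of nonsingular subvarieties of $Y_\nu$ disjoint from $q'$ yields the required morphism; again by lifting along $\nu$, the monomial form persists at the (possibly further modified) center $q'$. To conclude, I would choose an affine open $V_\nu\ni q'$ small enough that (i) $y_1,\ldots,y_n$ extend to regular functions whose differentials freely generate $\Omega_{V_\nu/k}$, making them uniformizing parameters, and (ii) the exceptional locus of $\beta_\nu$ on $V_\nu$ is contained in $Z(y_1\cdots y_n)$; this last point gives (2), while (3) is exactly conclusion 2) of Theorem \ref{TheoremF}, (1) is the monomial form together with the rank condition, and (4) follows formally from (1) since the monomial relations realize $\phi_\nu|_{V_\nu}$ as a toric morphism with respect to the stated divisors, which is the local model for toroidality as in Section 2.2 of \cite{ADK}.

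The hard part is the globalization step — simultaneously realizing the purely local modifications $R\to R'$ and $S\to S'$ of Theorem \ref{TheoremF} as global products of blowups of nonsingular subvarieties on $X$ and $Y$, while arranging that the lift of $\phi$ is a morphism on $Y_\nu$ (not just a rational map) and that the local data at $q'$, $p'$ still takes the monomial form prescribed by Theorem \ref{TheoremF}. This requires careful use of principalization, resolution of indeterminacy in characteristic zero, and a lifting argument that turns abstract sequences of monoidal transforms along $\nu$ into globally defined blowup sequences with nonsingular centers compatible with the valuation-theoretic picture.
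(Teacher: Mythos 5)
Your overall strategy (reduce to Theorem \ref{TheoremF} at the centers of $\nu$, realize the resulting modifications globally as products of blow ups of nonsingular subvarieties, then shrink to suitable affine neighborhoods) is the same as the paper's, but there is a genuine gap at conclusion 4). You assert that toroidality ``follows formally from (1) since the monomial relations realize $\phi_\nu|_{V_\nu}$ as a toric morphism.'' It does not, because of the units $\delta_i$: the relations are $x_i=\prod_j y_j^{c_{ij}}\delta_i$ with the $\delta_i$ non-constant units, and this is not yet a toric model in the sense of Section 2.2 of \cite{ADK}. To produce one you must absorb the $\delta_i$ into the $y_j$, i.e.\ solve $\prod_j\eta_j^{c_{ij}}=\delta_i^{-1}$ in units $\eta_j$; since $(c_{ij})$ only has rank $m$, this forces extraction of $d$-th roots of the $\delta_i$ with $d=\det(\overline C)$ for an invertible $m\times m$ minor $\overline C$, and such roots do not exist in $\Gamma(V_\nu,\mathcal O_{Y_\nu})$ in general. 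The entire technical content of the paper's proof is exactly this step: it passes to the finite \'etale cover $V'_\nu=\mbox{Spec}(B)$ with $B=A[z_1,\ldots,z_m]/(z_1^d-\delta_1,\ldots,z_m^d-\delta_m)$, changes coordinates $\overline y_j=\epsilon_j y_j$ with $\epsilon_j$ monomials in the $\overline z_l$ determined by $-\overline C^{-1}$ so that $x_i=\prod_j\overline y_j^{\,c_{ij}}$ holds exactly on the cover, and then verifies, using that the support of $\Omega^1_{V'_\nu/k}/(d\overline y_1\mathcal O_{V'_\nu}+\cdots+d\overline y_n\mathcal O_{V'_\nu})$ is closed, disjoint from $\pi^{-1}(q')$, and has closed image under the finite map $\pi$, that after shrinking $V_\nu$ the $\overline y_j$ are uniformizing parameters on the cover; this is what gives the \'etale-local toric model required by \cite{ADK}. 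Your argument never confronts the units, so 4) is not proved.

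A secondary issue is the globalization step. Principalizing sheaf-theoretic extensions of $J$, $\mathcal I$ and $K$ does not by itself produce $Y_\nu$ and $X_\nu$ whose local rings at the centers of $\nu$ are the specific $S'$ and $R'$, with the specific regular parameters, furnished by Theorem \ref{TheoremF}; the paper instead takes the given sequences of monoidal transforms themselves (each locally the blow up of a nonsingular subvariety) as the vertical arrows, and conclusions 1) and 2) of Theorem \ref{TheoremF} are precisely what make conclusions 2) and 3) of the present theorem hold on a suitably shrunken $V_\nu$. You rightly flag this as the delicate point, but the route you sketch (principalize first, then ``lift'' further monoidal transforms along $\nu$) would still need an argument that the resulting models dominate the blow ups prescribed by Theorem \ref{TheoremF} at $q'$ and $p'$ and that the monomial data survives there, which your proposal leaves unaddressed.
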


\begin{proof} Let $p$ be the center of $\nu$ on $X$ and $q$ be the center of $\nu$ on $Y$. Let $\nu^*=\nu$, $R=\mathcal O_{X,p}$, $S=\mathcal O_{Y,q}$, $I=\mathcal I_q$,
$K^*=k(Y)$ and $K=k(X)$. Let
$$
\begin{array}{lll}
R'&\rightarrow &S'\\
\uparrow&&\uparrow\\
R&\rightarrow&S
\end{array}
$$
be the diagram of the conclusions of Theorem \ref{TheoremF}. There exists a commutative diagram
$$
\begin{array}{lll}
Y_{\nu}&\stackrel{\phi_{\nu}}{\rightarrow}&X_{\nu}\\
\beta_{\nu}\downarrow&&\downarrow\alpha_{\nu}\\
Y&\stackrel{\phi}{\rightarrow}&X
\end{array}
$$
where the vertical arrows are products of blow ups of nonsingular sub varieties and if $p'$ is the center of $\nu$ on $X_{\nu}$ and $q'$ is the center of $\nu$ on $Y_{\nu}$ then $\mathcal O_{Y_{\nu},q'}=S'$ and $\mathcal O_{X_{\nu},p'}=R'$. Since $k$ has characteristic zero, there exists an affine open  neighborhood $U_{\nu}$ of $p'$ such that $x_1,\ldots,x_m$ are uniformizing parameters on $U_{\nu}$. Let $V_{\nu}$ be an affine neighborhood of $q'$ in $Y_{\nu}$ such that $y_1,\ldots,y_n$ are uniformizing parameters on $V_{\nu}$ and $\delta_1,\ldots,\delta_m\in \Gamma(V_{\nu},\mathcal O_{Y_{\nu}})$ are units. Let $\overline C$ be an $m\times m$ sub matrix of $C=(c_{ij})$ of rank $m$. Without loss of generality,
$$
\overline C=\left(\begin{array}{lll}
c_{11}&\cdots&c_{1m}\\
\vdots&&\vdots\\
c_{m1}&\cdots&c_{mm}
\end{array}\right).
$$
Let $d=\mbox{Det}(\overline C)$.
Let
$$
A=\Gamma(V_{\nu},\mathcal O_{Y_{\nu}})
$$
and
$$
B=A[z_1,\ldots,z_m]/(z_1^d-\delta_1,\ldots,z_m^d-\delta_m)=A[\overline z_1,\ldots,\overline z_m]
$$
where $\overline z_i$ is the class of $z_i$.

Let
$V_{\nu}'=\mbox{Spec}(B)$ with natural finite \'etale morphism $\pi:V_{\nu'}\rightarrow V_{\nu}$. Define $\overline y_j$ for $1\le j\le n$ by
 $$
 y_j=\overline y_j\prod_{l=1}^m\overline z_{l}^{db_{jl}}
 $$
 where
 $$
 \overline B=(b_{jl})=\left(\begin{array}{c}
 -\overline C^{-1}\\
 0\end{array}\right)
 $$
is an $n\times m$ matrix with coefficients in $\frac{1}{d}\ZZ$. We have that for $1\le i\le m$,
\begin{equation}\label{eqU1}
x_i=\prod_{j=1}^ny_j^{c_{ij}}\delta_i=\left(\prod_{j=1}^n\overline y_j^{c_{ij}}\right)\left(\prod_{l=1}^m\overline z_l^{\sum_{j=1}^nc_{ij}db_{jl}}\right)\delta_i
=\prod_{j=1}^n\overline y_j^{c_{ij}}.
\end{equation}
The morphism $\pi:V_{\nu}'\rightarrow V_{\nu}$ is \'etale so $y_1,\ldots,y_n$ are uniformizing parameters on $V_{\nu}'$ or equivalently, $dy_1,\ldots,dy_n$ are a free basis of $\Omega^1_{B/k}$ as a $B$-module. Let 
$$
\epsilon_j=\left(\prod_{l=1}^m\overline z_l^{db_{jl}}\right)^{-1}\in B
$$
and $\overline y_j=\epsilon_jy_j$ for $1\le j\le n$. Since $y_1,\ldots,y_n$ are regular parameters in $\mathcal O_{V'_{\nu},q''}$ for all $q''\in \pi^{-1}(q')$ and $d\overline y_j=\epsilon_jdy_j+y_jd\epsilon_j$, we have that
$$
\left(\Omega^1_{V'_{\nu}/k}/d\overline y_1\mathcal O_{V'_{\nu}}+\cdots+d\overline y_n\mathcal O_{V'_{\nu}}\right)_{q''}=0
$$
for all $q''\in \pi^{-1}(q')$. Let
$$
Z=\mbox{Supp}\left(\Omega^1_{V'_{\nu}/k}/d\overline y_1\mathcal O_{V'_{\nu}}+\cdots+d\overline y_n\mathcal O_{V'_{\nu}}\right).
$$
$Z$ is a closed subset of $V'_{\nu}$ which is disjoint from $\pi^{-1}(q')$. The morphism $V'_{\nu}\rightarrow V_{\nu}$ is finite, so $\pi(Z)$ is a closed subset of $V_{\nu}$ which does not contain $q'$. After replacing $V_{\nu}$ with an affine neighborhood of $q'$ in $V_{\nu}\setminus \pi(Z)$, we have that $\overline y_1,\ldots,\overline y_n$ are uniformizing parameters on $V'_{\nu}$ giving the monomial expression (\ref{eqU1}) and so $V_{\nu}\rightarrow U_{\nu}$ is toroidal  with respect to the locus of the products of the $y_j$ and the products of the $x_i$.
\end{proof}

\begin{Theorem}\label{TheoremM9} Suppose that $k$ is a field of characteristic zero, $\phi:Y\rightarrow X$ is a dominant morphism of $k$-varieties and $\mathcal I\subset \mathcal O_Y$ is a nonzero ideal sheaf. Then there exists a finite number  of commutative diagrams
$$
\begin{array}{lll}
Y_i&\stackrel{\phi_i}{\rightarrow}&X_i\\
\beta_i\downarrow&&\downarrow\alpha_i\\
Y&\stackrel{\phi}{\rightarrow}&X
\end{array}
$$
for $1\le i\le t$ such that the vertical arrows are products of blow ups of nonsingular sub varieties and there are    affine open subsets $V_i\subset Y_i$ and $U_i\subset X_i$ such that $\phi_i(V_i)\subset U_i$, $\cup_{i=1}^t\beta_i(V_i)=Y$ and the restriction $\phi_i:V_i\rightarrow U_i$ is toroidal with respect to strict normal crossings divisor $E_i$ on $V_i$ and $D_i$ on $X_i$ such that  the restriction of $\phi_i$ to $V_i\setminus E_i$ is an open immersion and  $\mathcal I\mathcal O_{V_i}$ is a divisor on $V_i$ whose support is contained in $E_i$.
\end{Theorem}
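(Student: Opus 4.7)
The natural strategy is to apply Theorem \ref{TheoremM6} at every zero-dimensional valuation of $k(Y)$ centered on $Y$ and then extract a finite subcover using the quasi-compactness of the Zariski--Riemann space.

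For each zero-dimensional valuation $\nu$ of $k(Y)$ (trivial on $k$) with a center on $Y$, Theorem \ref{TheoremM6} applied to $\phi$, $\nu$, and $\mathcal I$ yields a commutative diagram with $\beta_\nu : Y_\nu \to Y$ and $\alpha_\nu : X_\nu \to X$ products of blow-ups of nonsingular subvarieties, together with affine open neighborhoods $V_\nu \subset Y_\nu$ of the center $q'$ of $\nu$ and $U_\nu \subset X_\nu$ of the center $p'$, uniformizing parameters $y_1,\ldots,y_n$ on $V_\nu$ and $x_1,\ldots,x_m$ on $U_\nu$, and monomial expressions $x_i = \prod_j y_j^{c_{ij}} \delta_i$. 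I set $E_\nu = Z(y_1 \cdots y_n) \cap V_\nu$ and $D_\nu = Z(x_1 \cdots x_m) \cap U_\nu$; these are strict normal crossings divisors precisely because the $y_j$ and the $x_i$ are uniformizing parameters on $V_\nu$ and $U_\nu$. Conclusions (2)--(4) of Theorem \ref{TheoremM6} record, respectively, that $\beta_\nu$ restricts to an open immersion on $V_\nu \setminus E_\nu$, that $\mathcal I \mathcal O_{V_\nu}$ is a divisor supported in $E_\nu$, and that $\phi_\nu : V_\nu \to U_\nu$ is toroidal with respect to $E_\nu$ and $D_\nu$.

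To pass from this pointwise data to a finite cover, I consider the Zariski--Riemann space $\mathrm{ZR}_Y$ of valuation rings of $k(Y)$ trivial on $k$ and having a center on $Y$; it is quasi-compact. For each zero-dimensional $\nu$, the set
$$
\widetilde V_\nu = \{\mu \in \mathrm{ZR}_Y : \text{the center of } \mu \text{ on } Y_\nu \text{ lies in } V_\nu\}
$$
is open because the center map $\mathrm{ZR}_Y \to Y_\nu$ is continuous. These open sets cover $\mathrm{ZR}_Y$: given any $\mu \in \mathrm{ZR}_Y$, compose $\mu$ with a zero-dimensional valuation of its residue field (which exists on every finitely generated extension of $k$ via the classical transcendence-basis construction) to obtain a zero-dimensional specialization $\nu$ of $\mu$. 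On $Y_\nu$, the center of $\mu$ is a generalization of the center of $\nu$; since open sets are stable under generalization, the center of $\mu$ also lies in $V_\nu$, so $\mu \in \widetilde V_\nu$. Quasi-compactness produces finitely many $\nu_1,\ldots,\nu_t$, and I set $Y_i = Y_{\nu_i}$, $X_i = X_{\nu_i}$, $V_i = V_{\nu_i}$, $U_i = U_{\nu_i}$, $E_i = E_{\nu_i}$, $D_i = D_{\nu_i}$.

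The covering property $\bigcup_{i=1}^t \beta_i(V_i) = Y$ then follows: for any $y \in Y$, I produce a zero-dimensional valuation $\mu$ of $k(Y)$ with center $y$ by composing an arbitrary valuation dominating $\mathcal O_{Y,y}$ with a zero-dimensional valuation of the residue field $\kappa(y)$; by the cover there is an $i$ with $\mu \in \widetilde V_{\nu_i}$, so $\beta_i$ carries the center of $\mu$ on $Y_i$ (which lies in $V_i$) onto $y$. The conditions on $E_i$, $D_i$, the divisorial form of $\mathcal I \mathcal O_{V_i}$, the open immersion, and toroidality all transfer directly from the corresponding conclusions of Theorem \ref{TheoremM6}. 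The main obstacle is the compactness step: verifying that $\mathrm{ZR}_Y$ is quasi-compact, that $\widetilde V_\nu$ is open in $\mathrm{ZR}_Y$, and above all that every valuation admits a zero-dimensional specialization whose center on $Y_\nu$ still lies in $V_\nu$; this last point rests on the existence of zero-dimensional valuations on finitely generated field extensions of $k$ together with the stability of open sets under generalization.
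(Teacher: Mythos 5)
Your overall route is the same as the paper's: apply Theorem \ref{TheoremM6} at each zero-dimensional valuation, pull the resulting open sets $V_{\nu}$ back to the Zariski--Riemann space via the continuous center maps, show these pullbacks cover the whole space using zero-dimensional composite valuations together with the stability of open sets under generalization, and finish by quasi-compactness. However, one step as written fails. In your last paragraph you claim to ``produce a zero-dimensional valuation $\mu$ of $k(Y)$ with center $y$'' for an arbitrary $y\in Y$; no such valuation exists when $y$ is not a closed point, since the residue field of a zero-dimensional valuation is algebraic over $k$ while it must contain $\kappa(y)$. The composite you construct is centered at a closed point of $\overline{\{y\}}$, not at $y$, so as written it only shows that closed points are covered. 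The repair is immediate from your own setup: take any valuation $\omega$ of $k(Y)$ dominating $\mathcal O_{Y,y}$ (it need not be zero-dimensional); since $\widetilde V_{\nu_1},\ldots,\widetilde V_{\nu_t}$ cover all of $\mathrm{ZR}_Y$, some $\widetilde V_{\nu_i}$ contains $\omega$, and the center of $\omega$ on $Y_i$ lies in $V_i$ and maps to $y$ under $\beta_i$ by compatibility of centers under the proper morphism $\beta_i$. This is exactly the paper's (implicit) argument via the surjectivity of $\rho_Y$.

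A second point, which you flag but do not resolve, is that the zero-dimensional composite $\nu$ must lie in $\Sigma$, i.e., must itself have a center on $Y$: since $Y$ need not be proper, composing $\mu$ with an arbitrary zero-dimensional valuation of its residue field $k(\mu)$ (which, incidentally, need not be finitely generated over $k$, only of finite transcendence degree) may produce a valuation with no center on $Y$ at all, and then $Y_{\nu}$, $V_{\nu}$ are not even defined. One must choose the residue-field valuation so that its restriction to $\kappa(y)\subset k(\mu)$ has a center on $\overline{\{y\}}$ (e.g. dominates the local ring of a closed point of $\overline{\{y\}}$), after which zero-dimensionality and your generalization argument go through. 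The paper subsumes precisely this existence statement in its citation of Theorem 7, page 16 of \cite{ZS2}; you should either invoke such a reference or carry out this construction. With these two repairs your argument coincides with the paper's proof.
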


\begin{proof} Let $\Omega$ be the Zariski Riemann manifold of $Y$ (Section 17 of Chapter VI \cite{ZS2}). The points of $\Omega$ are equivalence classes of valuations of $k(Y)$ which dominate a local ring of $Y$. There are natural continuous surjections $\rho_Z:\Omega\rightarrow Z$ for any birational proper morphism $Z\rightarrow Y$. Let $\Sigma$ be the subset of $\Omega$ of zero dimensional valuations which dominate a local ring of $Y$. For each $\nu\in \Sigma$, we construct a diagram (\ref{eqM8}) with corresponding  open subset $V_{\nu}$ of $Y_{\nu}$. 

Suppose that $\omega$ is a valuation of $k(Y)$ which dominates a local ring of $Y$. If $\omega$ is not zero dimensional, then there exists $\nu\in \Sigma$ such that $\nu$ is composite with $\omega$ (Theorem 7, page 16 \cite{ZS2}), so that $\omega\in \rho_{Y_{\nu}}^{-1}(V_{\nu})$. Thus $\{\rho_{Y_{\nu}}^{-1}(V_{\nu})\mid \nu\in \Sigma\}$ is an open cover of $\Omega$, and  thus there is a finite sub cover since $\Omega$ is quasi compact (Theorem 40 page 113 \cite{ZS2}).

   \end{proof}

\end{document}